\documentclass{article}
\usepackage{amsmath,amsthm,amscd,amssymb}
\usepackage[mathscr]{eucal}
\usepackage{amssymb}
\usepackage{latexsym}

\theoremstyle{definition}
\newtheorem{Def}{Definition}[section]
\newtheorem{Thm}[Def]{Theorem}

\numberwithin{equation}{section}

\title{Notes on theta series for Niemeier lattices II}

\author{Shoyu Nagaoka}

\date{}

\begin{document}

\maketitle

\begin{abstract}
Following \cite{N-T},
we study some congruence properties satisfied by the theta series associated with 
Niemeier lattices. 
\end{abstract}
\section{Introduction}
This is a continuation of \cite{N-T}, where some congruence relations
that a Siegel theta series $\vartheta_{\mathcal{L}}^{(n)}$ satisfies for a Niemeier
lattice $\mathcal{L}$ were studied.
The beginning of this study was the discovery of the congruence relation
$$
\varTheta (\vartheta_\omega^{(2)}) \equiv 0 \pmod{23},
$$
where $\varTheta$ is the theta operator and $\omega$ is the Leech lattice
(for the definition, see \cite[$\S 2$]{N-T}). In \cite{N-T}, it was shown
that this relation is a consequence of the fact that $\vartheta_\omega^{(2)}$ is
congruent to the theta series for binary quadratic forms with discriminant
$-23$ (\cite[Theorem 3]{N-T}).
It should be noted that the prime number $23$ is a prime factor
of $|\text{Aut}(\omega)|$ (the order of the automorphism group of the
Leech lattice $\omega$).

In this study, we present another type of congruence relations, namely,
{\it mod 11 congruences} between Siegel theta series for Niemeier lattices
and theta series for quaternary quadratic forms.

Let $\{S_1,S_2,S_3 \}$ be a set of representatives of the unimodular
equivalence classes of the genus of the quaternary quadratic forms of
discriminant $11^2$ with level $11$ (for the explicit form, see $\S$ \ref{sec3-1}).
\\
The first result is as follows:
\\
\begin{Thm}
{\it
The following congruence relations hold:
\begin{align*}
& \vartheta_\alpha^{(3)} \equiv \vartheta_\kappa^{(3)}
                                    \equiv \vartheta_\psi^{(3)} \equiv \vartheta_{S_1}^{(3)} \pmod{11},
                                    \\
& \vartheta_\delta^{(3)} \equiv \vartheta_\iota^{(3)}
                                    \equiv \vartheta_\chi^{(3)} \equiv \vartheta_{S_2}^{(3)} \pmod{11},
                                    \\
& \vartheta_\epsilon^{(3)} \equiv \vartheta_\omega^{(3)}
                                    \equiv \vartheta_{S_3}^{(3)} \pmod{11}.
\end{align*}
where the Greek characters $\alpha,\,\kappa,\ldots$ denote the lattices labeled
as in {\rm \cite[Table 16.1]{C-S}}. }
\end{Thm}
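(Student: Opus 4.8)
\emph{Proof strategy.} The theta series $\vartheta_{\mathcal{L}}^{(3)}$ attached to a Niemeier lattice $\mathcal{L}$ of rank $24$ is a Siegel modular form of weight $12$ and degree $3$ on $Sp_6(\mathbb{Z})$, while $\vartheta_{S_i}^{(3)}$ is a Siegel modular form of weight $2$ and degree $3$ on $\Gamma_0^{(3)}(11)$ with trivial character (the discriminant $11^2$ being a perfect square). Writing the Fourier expansion of each theta series as $\sum_T r(T)\, e^{2\pi i\,\mathrm{tr}(TZ)}$, where $r(T)$ is the number of representations of the half-integral positive semidefinite matrix $T$ by the corresponding lattice, each asserted congruence $\vartheta_{\mathcal{L}}^{(3)} \equiv \vartheta_{S_i}^{(3)} \pmod{11}$ is equivalent to the statement that $r_{\mathcal{L}}(T) \equiv r_{S_i}(T) \pmod{11}$ for every such $T$. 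The plan is to reduce this infinite family of congruences to a finite verification. The first point to record is the weight match modulo $p-1$: here $p=11$ and $12 \equiv 2 \pmod{10}$, which is exactly the numerical coincidence that makes a mod $11$ congruence between a weight $12$ form and a weight $2$ form possible.

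To align the weights I would use the Siegel Eisenstein series $E_{10}^{(3)}$ of weight $10=p-1$ and degree $3$, which satisfies $E_{10}^{(3)} \equiv 1 \pmod{11}$ (the mod $p$ triviality of $E_{p-1}^{(n)}$ holds in this range of $p$ relative to $n$). Then $E_{10}^{(3)}\,\vartheta_{S_i}^{(3)}$ is a weight $12$ form on $\Gamma_0^{(3)}(11)$ whose Fourier coefficients are congruent mod $11$ to those of $\vartheta_{S_i}^{(3)}$, so it suffices to show that $G := \vartheta_{\mathcal{L}}^{(3)} - E_{10}^{(3)}\,\vartheta_{S_i}^{(3)}$, a weight $12$ degree $3$ form on $\Gamma_0^{(3)}(11)$ with $p$-integral coefficients, vanishes modulo $11$. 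For this I would invoke a Sturm-type bound for Siegel modular forms of degree $3$: there is an explicit constant $B=B(12,3,11)$ such that if $a_G(T) \equiv 0 \pmod{11}$ for all reduced $T$ with $\mathrm{tr}(T) \le B$, then $G \equiv 0 \pmod{11}$. This turns each congruence into checking finitely many representation numbers modulo $11$.

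A useful reduction before the computation is that the Fourier coefficient $r(T)$ of a degree $3$ theta series at a matrix $T$ of rank $\le 2$ coincides with the corresponding coefficient of the degree $1$ or degree $2$ theta series of the same lattice; hence, granting the analogous congruences in degrees $1$ and $2$, all coefficients of $G$ indexed by singular $T$ are automatically $\equiv 0 \pmod{11}$, and only the positive-definite $T$ with $\mathrm{tr}(T)\le B$ remain. The final step is then the explicit evaluation of the representation numbers $r_{\mathcal{L}}(T)$ for the relevant rank $24$ Niemeier lattices and $r_{S_i}(T)$ for the three quaternary forms, for these finitely many $T$, and the verification that they agree modulo $11$ in the three groupings asserted. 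I expect the main obstacle to be precisely this finite-but-substantial computation together with pinning down an effective Sturm bound in degree $3$ with level $11$: the representation numbers of positive-definite ternary data by a $24$-dimensional lattice grow quickly, so the bookkeeping (and the justification that the chosen bound is valid in the mod $p$, level $11$ setting) is where the real work lies, whereas the modular-forms framework above is routine.
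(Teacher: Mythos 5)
Your proposal has the right skeleton (align the weights mod $11$, invoke a Sturm bound to reduce to finitely many Fourier coefficients, dispose of singular $T$ by lower-degree congruences, then compute), but it has a genuine gap at exactly the point you flag as ``where the real work lies.'' Multiplying by $E_{10}^{(3)}$ raises the weight but does \emph{not} lower the level: your form $G=\vartheta_{\mathcal{L}}^{(3)}-E_{10}^{(3)}\vartheta_{S_i}^{(3)}$ still lives on $\Gamma_0^{(3)}(11)$, so you need a Sturm bound for degree-$3$ forms of level $11$ modulo $11$. You never produce one, and the Sturm result the paper relies on (Richter--Westerholt-Raum, Theorem \ref{Theorem2.2}) is stated only for the full modular group $\Gamma_n$. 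The paper's key device, which your argument is missing, is the B\"ocherer--Nagaoka theorem (Theorem \ref{Theorem2.1}): since $11\equiv 3\pmod 4$, $11\geq n+3$, and $S_i$ has determinant $11^2$ and level $11$, the weight-$2$ level-$11$ form $\vartheta_{S_i}^{(3)}$ is congruent mod $11$ to an honest \emph{level-one} form $G_i\in M_{12}(\Gamma_3)_{\mathbb{Z}_{(11)}}$ of weight $p+1=12$. This is a strictly stronger statement than your $E_{p-1}$ trick (it descends the level, not just raises the weight), and it is what makes the rest of the argument work: $\vartheta_{\mathcal{L}}^{(3)}-G_i$ is then a level-one weight-$12$ form, so the level-one Sturm bound applies with the tiny threshold $t_{ii}\leq (4/3)^3\cdot 12/16=16/9$, i.e.\ $t_{ii}\leq 1$. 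The entire ``finite computation'' collapses to checking four degree-$2$ coefficients and three degree-$3$ coefficients per congruence. By contrast, even if a level-$11$ degree-$3$ Sturm bound were cited, such bounds grow with the index $[\Gamma_3:\Gamma_0^{(3)}(11)]$, so the verification you propose (representation numbers of rank-$24$ lattices at matrices with large diagonal entries) would be computationally out of reach; your strategy is not salvageable in practice without the level-descent step.

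Two smaller points. First, the paper does not verify a congruence for each of the eight Niemeier lattices separately: it uses the criterion of \cite[Corollary 4]{N-T}, that $h_{\mathcal{L}_1}\equiv h_{\mathcal{L}_2}\pmod{11}$ for Coxeter numbers implies $\vartheta_{\mathcal{L}_1}^{(3)}\equiv\vartheta_{\mathcal{L}_2}^{(3)}\pmod{11}$, to collapse each row of the theorem to a single comparison (e.g.\ $h_\alpha=46$, $h_\kappa=13$, $h_\psi=2$ are all $\equiv 2\pmod{11}$), leaving only $\vartheta_\alpha^{(3)}\equiv\vartheta_{S_1}^{(3)}$, $\vartheta_\delta^{(3)}\equiv\vartheta_{S_2}^{(3)}$, $\vartheta_\omega^{(3)}\equiv\vartheta_{S_3}^{(3)}$ to check; without this your computational load is multiplied further. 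Second, your assertion $E_{10}^{(3)}\equiv 1\pmod{11}$ is true in this range of $p$ relative to $n$, but it is a nontrivial theorem and would need a citation; in the paper's route it is not needed at all.
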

In \cite{O}, Ozeki obtained a large number of numerical data regarding the Fourier coefficients of Siegel
theta series $\vartheta_\omega^{(n)}$ for the Leech lattice $\omega$. His calculations
allow the generalization of a congruence relation that $\vartheta_\omega^{(4)}$
satisfies.
\begin{Thm}
{\it We obtain
$$
\vartheta_\omega^{(4)} \equiv \vartheta_{S_3}^{(4)} \pmod{11}.
$$
In particular, the congruence relation
$$
\varTheta (\vartheta_\omega^{(4)}) \equiv 0 \pmod{11}
$$
holds. Namely, $\vartheta_\omega^{(4)}$ is in the mod 11 kernel of the theta
operator $\varTheta$.}
\end{Thm}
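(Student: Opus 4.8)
The plan is to obtain the degree-$4$ congruence from the degree-$3$ congruences of the preceding theorem by means of the Siegel operator, thereby reducing the first assertion to a finite comparison of Fourier coefficients, and then to deduce the kernel statement by a determinant argument peculiar to quaternary forms. Write $F := \vartheta_\omega^{(4)} - \vartheta_{S_3}^{(4)}$ and, for a positive semidefinite half-integral $4\times 4$ matrix $T$, let $a_F(T)$ denote its Fourier coefficient. I note first that the two series have different weights, $12=\tfrac12\cdot 24$ for the Leech lattice and $2=\tfrac12\cdot 4$ for $S_3$; the congruence is nonetheless meaningful because $12\equiv 2\pmod{10}$ with $10=11-1$, so both reductions lie in the same weight class modulo $11$ in the sense of Serre.

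The first step is to apply the Siegel operator $\Phi$. Since $\Phi(\vartheta_L^{(n)})=\vartheta_L^{(n-1)}$ for any lattice $L$, the degree-$3$ congruences give $\Phi(F)=\vartheta_\omega^{(3)}-\vartheta_{S_3}^{(3)}\equiv 0\pmod{11}$. On Fourier expansions $\Phi$ extracts the coefficients indexed by the degenerate matrices $\bigl(\begin{smallmatrix} T' & 0\\ 0 & 0\end{smallmatrix}\bigr)$, and every $T$ with $\operatorname{rank}T\le 3$ is $GL_4(\mathbb{Z})$-equivalent to one of this shape; by the invariance $a_F(T)=a_F(U^{t}TU)$ under such unimodular substitutions, all coefficients $a_F(T)$ with $\operatorname{rank}T\le 3$ vanish modulo $11$. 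Thus $F\bmod 11$ is cuspidal, and it remains only to control the coefficients attached to positive definite (rank-$4$) $T$.

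The second step makes this finite. The relevant space of mod-$11$ Siegel cusp forms of degree $4$ and of the given weight and level is finite dimensional, and a Sturm-type bound for Siegel modular forms guarantees that $F\equiv 0\pmod{11}$ as soon as $a_F(T)\equiv 0\pmod{11}$ for all $T$ with $\operatorname{tr}T$ below an explicit constant. For these finitely many $T$ the coefficients of $\vartheta_{S_3}^{(4)}$ are representation numbers of the quaternary form $S_3$ in four variables and are directly computable, whereas those of $\vartheta_\omega^{(4)}$ are the numbers of representations of $4\times 4$ Gram matrices by the $24$-dimensional Leech lattice, which one reads off from Ozeki's tables in \cite{O}. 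I expect this input to be the real obstacle: the degree-$4$ Leech coefficients are prohibitively expensive to produce from scratch, so the argument is feasible only because Ozeki's data are available; a subsidiary point is to fix the Sturm bound for degree $4$ at level $11$ so that the finite range of $T$ is provably sufficient. Granting the coefficient comparison, $F\equiv 0\pmod{11}$, which is the first assertion.

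Finally, the kernel statement follows formally. The theta operator multiplies the Fourier coefficient at $T$ by $\det T$, that is $\varTheta\bigl(\sum_T a(T)\,q^T\bigr)=\sum_T (\det T)\,a(T)\,q^T$, so the congruence just proved yields $\varTheta(\vartheta_\omega^{(4)})\equiv\varTheta(\vartheta_{S_3}^{(4)})\pmod{11}$, and it suffices to show the right-hand side vanishes. Here the quaternary nature of $S_3$ is decisive: if $a_{S_3}(T)\ne 0$ then $2T=X^{t}GX$ for some $X\in M_4(\mathbb{Z})$, where $G$ is the Gram matrix of $S_3$ with $\det G=11^2$. For $\operatorname{rank}T=4$ this forces $\det(2T)=\det(X)^2\det G$ to be a nonzero multiple of $11^2$, whence $(\det T)\,a_{S_3}(T)\equiv 0\pmod{11}$, while for $\operatorname{rank}T\le 3$ one has $\det T=0$. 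In either case $(\det T)\,a_{S_3}(T)\equiv 0\pmod{11}$, so $\varTheta(\vartheta_{S_3}^{(4)})\equiv 0$ and therefore $\varTheta(\vartheta_\omega^{(4)})\equiv 0\pmod{11}$, as claimed.
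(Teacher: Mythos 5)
Your proposal follows the same overall architecture as the paper's proof: reduce to positive definite $T$ via the degree-3 congruence (your Siegel-operator formulation of this is correct), make the problem finite with a Sturm bound, compare the remaining coefficients against Ozeki's tables, and deduce the kernel statement from $\det S_3 = 11^2$. Your closing determinant argument is correct and complete (the paper leaves that implication implicit). But there is a genuine gap at the Sturm step, and it is precisely where the paper has to do real work. The difference $F = \vartheta_\omega^{(4)} - \vartheta_{S_3}^{(4)}$ is not a Siegel modular form of any single weight and level: $\vartheta_\omega^{(4)}$ has weight $12$ and level $1$, while $\vartheta_{S_3}^{(4)}$ has weight $2$ and level $11$. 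The Sturm bound in play (Theorem \ref{Theorem2.2}, Richter--Westerholt-Raum) applies to one form in $M_k(\Gamma_n)_{\mathbb{Z}_{(p)}}$, i.e.\ a single weight and the full modular group, so it cannot be applied to $F$; and your fallback of ``fixing the Sturm bound for degree $4$ at level $11$'' fails for a second reason: the modulus $p=11$ divides the level, which is exactly the degenerate situation that Sturm-type theorems mod $p$ exclude (and even then $F$ would still mix two weights). Your remark that $12 \equiv 2 \pmod{10}$ ``in the sense of Serre'' names the right phenomenon, but that theory is a theorem only for degree $1$; for degree-$n$ Siegel forms of level $p$ it is precisely the content of the B\"ocherer--Nagaoka theorem (Theorem \ref{Theorem2.1}), which is the ingredient your proof is missing. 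Since $S_3$ is even, positive definite, of level $11$ with $\det S_3 = 11^2$, and $11 \equiv 3 \pmod 4$, $11 \geq 4+3$, that theorem produces $G_3 \in M_{12}(\Gamma_4)_{\mathbb{Z}_{(11)}}$ with $\vartheta_{S_3}^{(4)} \equiv G_3 \pmod{11}$; the Sturm bound is then legitimately applied to $\vartheta_\omega^{(4)} - G_3 \in M_{12}(\Gamma_4)_{\mathbb{Z}_{(11)}}$, giving the explicit bound $t_{ii} \leq (4/3)^4 \cdot 12/16 < 3$.

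A secondary point: the finite verification you ``grant'' is where the remaining content lies, and it is organized in the paper so that almost nothing needs to be computed. With the bound $t_{ii} \leq 2$, any $T$ with some $t_{ii}=1$ has $a(\vartheta_\omega^{(4)};T)=0$ (the Leech lattice has no vectors of norm $2$) and $a(\vartheta_{S_3}^{(4)};T)=0$ as well; for $T$ with diagonal $(2,2,2,2)$, the identity $\det(2T) = 11^2 \det(X)^2$ for any representation $S_3[X]=2T$ shows $a(\vartheta_{S_3}^{(4)};T)=0$ unless $d_T = 11^2$, while Ozeki's table shows $11 \mid a(\vartheta_\omega^{(4)};T)$ for every class with $d_T \neq 11^2$; the single remaining class $d_T = 11^2$ is settled by $a(\vartheta_\omega^{(4)};T) = 12599323656192000 \equiv 2 \equiv 24 = |\mathrm{Aut}(S_3)| = a(\vartheta_{S_3}^{(4)};T) \pmod{11}$. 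So the coefficient comparison is feasible, but only after this case analysis; as written, your proposal defers it entirely.
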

In the final section, we study some congruence relations that
the theta series $\vartheta_\omega^{(n)}$ satisfies.

\section{Preliminaries}
\label{sec:2}
\subsection{Notation}
\label{sec:2-1}
The present notation primarily follows from \cite{N-T}. 

Let $\Gamma_n:=Sp_n(\mathbb{Z})$ be the Siegel modular group of degree $n$,
and let $\mathbb{H}_n$ be the Siegel upper half-space of degree $n$.
For a subgroup  $\Gamma\subset \Gamma_n$,
we denote by $M_k(\Gamma)$ the
$\mathbb{C}-$vector space of all Siegel modular forms of weight $k$ for $\Gamma$.

Any $F(Z)$ in  $M_k(\Gamma_n)$ has a Fourier expansion of the form
$$
F(Z)=\sum_{0\leq T\in Sym_n^*(\mathbb{Z})}a(F;T)q^T,\quad
         q^T:=\text{exp}(2\pi\sqrt{-1}\text{tr}(TZ)),\;Z\in\mathbb{H}_n,
$$
where
$$
Sym_n^*(\mathbb{Z}):=\{\,T=(t_{ij})\in Sym_n(\mathbb{Q})\,\mid\,
                                        t_{ii}, 2t_{ij}\in \mathbb{Z}\,\}.
$$
For a subring $R$ in $\mathbb{C}$, let $M_k(\Gamma_n)_R\subset M_k(\Gamma_n) $
denote the $R$-module of all modular forms whose Fourier coefficients lie in $R$.
As explained in \cite{N-T}, we may consider $F\in M_k(\Gamma_n)_R $
to be an element of the formal power series ring
$$
F\in \sum a(F;T)q^T\in R[q_{ij},q_{ij}^{-1}][\![q_{11},\ldots,q_{nn}]\!]
$$
when we write $q_{ij}:=\text{exp}(2\pi\sqrt{-1}z_{ij})$(cf. \cite[\S 2.2]{N-T}).

For a prime number $p$, we denote by $\mathbb{Z}_{(p)}$ the ring of $p$-integral
rational numbers.
For two elements
$$
F_m=\sum a(F_m;T)q^T\in\mathbb{Z}_{(p)}[q_{ij},q_{ij}^{-1}][\![q_{11},\ldots,q_{nn}]\!]\quad(m=1,2),
$$
we write $F_1 \equiv F_2 \pmod{p}$ if the congruence relation
$$
a(F_1;T) \equiv a(F_2;T) \pmod{p}
$$
is satisfied for all $T$ with $0\leq T\in Sym_n^*(\mathbb{Z})$.

 We associate with $F=\sum a(F;T)q^T\in M_k(\Gamma_n)$ the formal
power series
$$
\varTheta (F):=\sum a(F;T)\cdot\text{det}(T)q^T
\in\mathbb{C}[q_{ij},q_{ij}^{-1}][\![q_{11},\ldots,q_{nn}]\!].
$$
It should be noted that $\varTheta (F)$ is not necessarily a modular form.
The operator $\varTheta$ is called {\it the theta operator} (cf. \cite[\S 2.4]{N-T}).
\subsection{Theta series for lattices and matrices}
\label{sec:2-2}
For a positive definite integral lattice $\mathcal{L}$ of degree $m$, we write the Gram
matrix as $S=S_{\mathcal{L}}\in Sym_m(\mathbb{Z})$. We associate with it the theta
series
$$
\vartheta_{\mathcal{L}}^{(n)}=\vartheta_S^{(n)}(Z):=
\sum_{X\in M_{m\times n}(\mathbb{Z})}\text{exp}(\pi\sqrt{-1}\text{tr}(S[X]Z),\;
Z\in \mathbb{H}_n,
$$
where $S[X]={}^tXSX$. In particular
$$
\vartheta_{\mathcal{L}}^{(n)}=\vartheta_S^{(n)}(Z)\in M_{\frac{m}{2}}(\Gamma_n)_{\mathbb{Z}}
$$
if $\mathcal{L}$ is an even unimodular lattice of degree $m$.

We now quote the following result, which will be used in the sequel and is a special case of
a theorem by B\"{o}cherer and Nagaoka.

\begin{Thm}
\label{Theorem2.1}
(B\"{o}cherer--Nagaoka \cite[Theorem 4]{B-N})\;{\it
We assume that $p\geq n+3$ and $p \equiv 3 \pmod{4}$. Let $S\in 2Sym_4^*(\mathbb{Z})$
be an even integral, positive definite symmetric matrix with $\text{det}S=p^2$ with
level $p$. Then, $\vartheta_S^{(n)}\in M_2(\Gamma_0^n(p))$, and there exists a
modular form $G\in M_{p+1}(\Gamma_n)_{\mathbb{Z}_{(p)}}$ such that
$$
\vartheta_S^{(n)} \equiv G \pmod{p}.
$$}
\end{Thm}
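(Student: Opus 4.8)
The plan is to treat the two assertions separately: the modularity statement $\vartheta_S^{(n)}\in M_2(\Gamma_0^n(p))$, and the mod-$p$ lift to the full modular group $\Gamma_n$. For the first, I would invoke the classical transformation theory of theta series of quadratic forms (as in Andrianov or Eichler): for a positive definite even integral matrix $S$ of rank $m$ and level $N$, the series $\vartheta_S^{(n)}$ lies in $M_{m/2}(\Gamma_0^n(N),\chi_S)$ with nebentypus $\chi_S(\det D)=\left(\frac{(-1)^{mn/2}\det S}{\det D}\right)$. Here $m=4$ gives weight $2$ and $N=p$, and since $(-1)^{mn/2}=1$ while $\det S=p^2$ is a perfect square, the character $\chi_S$ is trivial on arguments prime to $p$. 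Hence $\vartheta_S^{(n)}\in M_2(\Gamma_0^n(p))$, and its Fourier coefficients, being representation numbers, are non-negative integers and in particular $p$-integral. This disposes of the first claim without real difficulty.

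For the mod-$p$ lift the plan is to raise the weight by a mod-$p$ trivialization and then remove the level by a trace. First I would use the normalized Siegel--Eisenstein series of weight $p-1$: the hypotheses $p\ge n+3$ and $p\equiv 3\pmod 4$ guarantee, by the earlier work of Nagaoka and of B\"ocherer--Nagaoka, the existence of $A\in M_{p-1}(\Gamma_n)_{\mathbb{Z}_{(p)}}$ with $A\equiv 1\pmod p$. Then $\vartheta_S^{(n)}\cdot A\in M_{p+1}(\Gamma_0^n(p))_{\mathbb{Z}_{(p)}}$ and $\vartheta_S^{(n)}\cdot A\equiv\vartheta_S^{(n)}\pmod p$. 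Next I would apply the trace operator $\mathrm{Tr}=\mathrm{Tr}^{\Gamma_n}_{\Gamma_0^n(p)}$, summing the slash action over coset representatives of $\Gamma_0^n(p)\backslash\Gamma_n$, and set $G:=\mathrm{Tr}\big(\vartheta_S^{(n)}\cdot A\big)\in M_{p+1}(\Gamma_n)$. One then checks that $G$ has $p$-integral Fourier coefficients, since the index $[\Gamma_n:\Gamma_0^n(p)]$ is prime to $p$ and the representatives involving the Fricke element $W_p$ only introduce positive powers of $p$ into the numerators.

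It remains to prove $G\equiv\vartheta_S^{(n)}\pmod p$, and this is where I expect the real work to lie. The identity coset contributes $\vartheta_S^{(n)}\cdot A\equiv\vartheta_S^{(n)}$, so I must show that all remaining cosets contribute $0$ modulo $p$. These contributions factor through the Fricke involution, and here the special shape of $S$ helps: the theta transformation formula gives $\vartheta_S^{(n)}\big|W_p=c\,\vartheta_{S'}^{(n)}$, where $S'=pS^{-1}$ is again even integral of rank $4$, determinant $p^2$ and level $p$, and $c$ is a $p$-adic unit (the determinantal and Gauss-sum factors combine to a unit precisely because the weight is $2=m/2$ and $\det S=p^2$ is a square). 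The non-identity cosets then produce a $U_p$-type operator applied to $\vartheta_{S'}^{(n)}$, whose $T$-th Fourier coefficient is, up to the unit $c$, a representation number $r_{S'}$ of an argument divisible by $p$.

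The crux, and the main obstacle, is to show that these representation numbers are themselves divisible by $p$. I would establish this through the local representation densities at $p$ (Siegel's formula), where the hypotheses $\det S=p^2$, level $p$, and $p\equiv 3\pmod 4$ are exactly what force the relevant local factor to carry a factor of $p$. In geometric language the same statement reads: the mod-$p$ reduction of $\vartheta_S^{(n)}$ already descends from $\Gamma_0^n(p)$ to $\Gamma_n$, i.e.\ its sections on the two mod-$p$ components of the $\Gamma_0^n(p)$-structure are Frobenius-compatible. The Fricke partner $S'$ is precisely what encodes the second component, so matching the two expansions mod $p$ is the decisive step; once it is in hand, $G$ is the desired weight-$(p+1)$ form on $\Gamma_n$ congruent to $\vartheta_S^{(n)}$ modulo $p$.
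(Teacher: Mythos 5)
The first thing to say is that the paper does not prove this statement at all: its entire ``proof'' is the citation, i.e.\ the theorem is quoted as a special case of \cite[Theorem 4]{B-N}, with $f=\vartheta_S^{(n)}\in M_2(\Gamma_0^n(p))^0$ and $g=G$ in the notation of that paper. So your proposal is really an attempt to reprove the cited black box, and it must be judged as such. Your first step (modularity: weight $m/2=2$, level $p$, trivial nebentypus because $\det S=p^2$ is a square) is fine, and multiplying by $A\in M_{p-1}(\Gamma_n)_{\mathbb{Z}_{(p)}}$ with $A\equiv 1\pmod p$ is the standard weight-raising device. The fatal problem is the crux you yourself flag: the claim that ``all remaining cosets contribute $0$ modulo $p$'' in the trace is \emph{false}, and the divisibility statement you propose to prove (representation numbers of $p$-divisible arguments are divisible by $p$) is also false. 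Already in degree $1$, with $k=p+1$ and coset representatives $1$ and $ST^j$ $(j\bmod p)$, a direct computation using the theta inversion formula (which gives $\vartheta_S|_2W_p=-\vartheta_{S'}$ exactly, $S'=pS^{-1}$, the sign coming from $(z/i)^{m/2}$ with $m=4$) yields
$$
\mathrm{Tr}\bigl(\vartheta_S\cdot A\bigr)=\vartheta_S\cdot A-A\cdot\bigl(\vartheta_{S'}|U_p\bigr),
$$
and $\vartheta_{S'}|U_p$ is not $\equiv 0\pmod p$: its constant term is $1$. In fact $\vartheta_{S'}|U_p$ is again a theta series of a form in the same genus: over $\mathbb{Z}_p$ one has $S'\cong U\perp pV$ with $U$ binary unimodular and anisotropic mod $p$, so every $x$ with $S'[x]\in 2p\mathbb{Z}$ lies in the index-$p^2$ sublattice $M=pL_U\oplus L_V$, on which $\frac{1}{p}S'|_M$ is again even of determinant $p^2$ and level $p$; hence $\vartheta_{S'}|U_p=\vartheta_{S''}$ with $S''$ a genus-mate. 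The paper's own data confirm that such representation numbers are units mod $p$: $a(\vartheta_{S_3}^{(4)};T)=24\not\equiv 0\pmod{11}$ for $d_T=11^2$. So the non-identity cosets contribute a unit multiple of a genus-mate theta series, not $0$, and the naive $G:=\mathrm{Tr}(\vartheta_S^{(n)}A)$ is congruent to $\vartheta_S-\vartheta_{S''}$, not to $\vartheta_S$.

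Two further points. The tool you propose for the crux --- Siegel's formula / local representation densities --- could not close the gap even in principle: Siegel's theorem controls the weighted \emph{genus average} of representation numbers, not those of an individual class, and this genus has three classes (for $p=11$), so no class-wise divisibility can be extracted from local densities; here it would be used to prove a statement that is actually false. Moreover, for $n\ge 2$ the coset space $\Gamma_0^n(p)\backslash\Gamma_n$ is not exhausted by the identity coset and ``Fricke times unipotent'' cosets: there are intermediate double cosets indexed by a rank $0\le r\le n$, producing partial Fricke/$U_p$ operators, and controlling their contributions and the $p$-integrality of the trace is precisely the technical content of \cite{B-N} --- this is where $p\ge n+3$ is genuinely used, not merely for the existence of $A$ as in your sketch. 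So the passage from level $p$ to level $1$ modulo $p$, which is the whole substance of the theorem, remains unproved in your proposal, and the route you indicate for it cannot be repaired as stated; whatever mechanism \cite{B-N} uses (encoded in the hypothesis $f\in M_2(\Gamma_0^n(p))^0$) must deal with the genus-mate contribution rather than show it vanishes.
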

\begin{proof}
We apply \cite[Theorem 4]{B-N}, where
$$
f=\vartheta_S^{(n)}\in M_2(\Gamma_0^n(p))^0,\quad
g=G\in M_{p+1}(\Gamma_n)
$$
in the notation of \cite{B-N}.
\end{proof}
\subsection{Sturm bounds for Siegel modular forms}
\label{sec:2-3}
We introduce a result of Richter and Westerholt-Raum \cite{R-R}
concerning the so-called Sturm bound.
\begin{Thm}
\label{Theorem2.2}
(Richter--Westerholt-Raum \cite{R-R}) \;{\it 
We assume that $p$ is a prime number and $F=\sum a(F;T)q^T$ is a modular
form in $M_k(\Gamma_n)_{\mathbb{Z}_{(p)}}$ $(n\geq 2)$. If
$$
a(F;T) \equiv 0 \pmod{p}
$$
for all $0\leq T=(t_{ij})\in Sym_n^*(\mathbb{Z})$ with
$$
t_{ii}\leq \left(\frac{4}{3}\right)^n\frac{k}{16}\quad
(i=1,\ldots,n),
$$
then
$$
a(F;T) \equiv 0 \pmod{p}
$$
for all $0\leq T\in Sym_n^*(\mathbb{Z})$.}
\end{Thm}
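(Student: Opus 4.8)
The plan is to prove the statement by induction on the degree $n$, reducing the general Siegel case to the classical one-variable Sturm bound, which for $\Gamma_1$ gives the threshold $k/12=\tfrac43\cdot\tfrac{k}{16}$, matching the exponent in the claimed bound. Two structural facts do most of the work. First, the Fourier coefficients of $F\in M_k(\Gamma_n)$ are class functions for the $\mathrm{GL}_n(\mathbb{Z})$-action: for $U\in\mathrm{GL}_n(\mathbb{Z})$ one has $a(F;{}^tUTU)=(\det U)^k\,a(F;T)$, so it suffices to establish the vanishing of $a(F;T)$ on a set of representatives of the $\mathrm{GL}_n(\mathbb{Z})$-equivalence classes, e.g.\ on Minkowski-reduced $T$. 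Second, Minkowski reduction theory bounds the diagonal of a reduced form against its determinant: for reduced $T$ one has $\prod_i t_{ii}\le c_n\det(T)$ together with $|2t_{ij}|\le t_{ii}$, so that a reduced $T$ with all $t_{ii}$ below a fixed bound lies in a finite set, while by Hadamard $\prod_i t_{ii}\ge\det(T)$ shows that a large determinant forces a large diagonal entry. The accumulation of the Minkowski constant is what I expect to produce the factor $(4/3)^n$: the reduction inequality relating consecutive diagonal entries carries a factor governed by the rank-two Hermite constant $2/\sqrt3$, whose square is $4/3$, so that passing from degree $n-1$ to degree $n$ multiplies the admissible bound by $4/3$.

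For the inductive step I would use the Fourier--Jacobi expansion. Writing $Z=\left(\begin{smallmatrix} Z' & z\\ {}^tz & \tau\end{smallmatrix}\right)$ with $Z'\in\mathbb{H}_{n-1}$, expand
$$F(Z)=\sum_{m\ge 0}\phi_m(Z',z)\,\text{exp}(2\pi\sqrt{-1}\,m\tau),$$
where each $\phi_m$ is a Jacobi form of weight $k$ and index $m$ whose coefficients are exactly the $a(F;T)$ with $t_{nn}=m$; in particular $\phi_0$ is the image of $F$ under the Siegel $\Phi$-operator and lies in $M_k(\Gamma_{n-1})_{\mathbb{Z}_{(p)}}$. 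The coefficients assumed to vanish mod $p$ include all those indexed by reduced $T$ with $t_{nn}=0$ and diagonal below $(4/3)^{n-1}k/16$, so the induction hypothesis applies to $\phi_0$ and forces $\phi_0\equiv 0\pmod p$, i.e.\ the vanishing of all $a(F;T)$ having a zero on the diagonal.

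It remains to treat $\phi_m$ for $m\ge 1$, and this is where I expect the main difficulty to lie, since there are infinitely many indices $m$ while only finitely many coefficients are assumed to vanish. The tool is the theta decomposition: a Jacobi form of index $m$ can be written as $\phi_m=\sum_\mu h_\mu\,\vartheta_\mu$, with $\vartheta_\mu$ a fixed finite family of theta series and $h=(h_\mu)$ the components of a vector-valued elliptic modular form of weight $k-\tfrac{n-1}{2}$ for a Weil representation attached to the index $m$. The $p$-integrality of $F$ descends to $h$, and the classical Sturm bound in its vector-valued form (which follows from the scalar case applied componentwise after clearing the finite-order character) bounds the coefficients of each $h_\mu$, hence of $\phi_m$, needed to certify $\phi_m\equiv 0\pmod p$. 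The crux is to verify that these component-wise bounds, expressed in the elliptic variable, translate through the reduction inequalities into the uniform diagonal bound $(4/3)^n\,k/16$ \emph{independently of} $m$; concretely one must check that a reduced $T$ whose largest diagonal entry $t_{nn}=m$ exceeds the threshold has its vanishing propagated from the finitely many small-diagonal coefficients via the vector-valued Sturm bound for $h$.

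Finally I would assemble the pieces: the $\mathrm{GL}_n(\mathbb{Z})$-invariance reduces everything to reduced $T$; the Fourier--Jacobi expansion together with the induction hypothesis disposes of $\phi_0$; the theta decomposition and vector-valued Sturm bound dispose of each $\phi_m$ with $m\ge1$, the determinant--diagonal inequalities of Minkowski reduction guaranteeing that exactly the coefficients with $t_{ii}\le(4/3)^n k/16$ suffice. The delicate bookkeeping --- confirming that the constant produced by the induction is precisely $(4/3)^n/16$ rather than something weaker, and that the theta-decomposition step neither loses $p$-integrality nor inflates the bound --- is, I expect, the principal obstacle; everything else is a matter of organizing standard reduction theory and the one-variable Sturm bound.
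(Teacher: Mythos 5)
First, a point of comparison: the paper does not prove this statement at all --- Theorem \ref{Theorem2.2} is quoted verbatim from Richter--Westerholt-Raum \cite{R-R}, so your proposal has to be measured against the actual content of that theorem rather than against an in-paper argument. The preliminary reductions you set up are sound: the base case $n=1$ matches Sturm's classical bound since $\frac{4}{3}\cdot\frac{k}{16}=\frac{k}{12}$; the invariance $a(F;{}^tUTU)=(\det U)^k\,a(F;T)$ for $U\in \mathrm{GL}_n(\mathbb{Z})$ legitimately reduces everything to Minkowski-reduced $T$; and the index-$0$ Fourier--Jacobi coefficient $\phi_0$ is indeed the image of $F$ under the Siegel $\Phi$-operator, so the induction hypothesis applies to it.

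The genuine gap is the step you yourself call ``the crux,'' and it is not delicate bookkeeping: it is the entire theorem, and the mechanism you propose for it cannot work. Two concrete problems. (i) The induction does not close: for $n\geq 3$ the theta-decomposition components $h_\mu$ of $\phi_m$ are \emph{vector-valued Siegel modular forms of genus $n-1$ and half-integral weight} $k-\frac{n-1}{2}$, transforming under a Weil representation whose level grows with $m$; the induction hypothesis concerns scalar, integral-weight, level-one forms and says nothing about these, and ``applying the scalar case componentwise after clearing the character'' produces forms of level roughly $4m$, for which any Sturm bound has a threshold growing with $m$, not a uniform one. (ii) More fundamentally, for an index $m>B:=(4/3)^n k/16$ the hypothesis of the theorem supplies vanishing data inside $\phi_m$ \emph{only} through $\mathrm{GL}_n(\mathbb{Z})$-equivalence: a coefficient $a(F;T)$ with $t_{nn}=m$ is known to vanish mod $p$ only when $T$ is equivalent to a matrix with all diagonal entries $\leq B$, which forces $\det T\leq B^n$. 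The number of such coefficients is bounded essentially independently of $m$, whereas the space of Jacobi forms of weight $k$ and index $m$ (genus $n-1$) has dimension growing without bound in $m$; hence for large $m$ there exist nonzero mod $p$ Jacobi forms of index $m$ annihilating every condition available to you, and no argument that treats $\phi_m$ in isolation --- however sharp a Jacobi or vector-valued Sturm bound it invokes --- can certify $\phi_m\equiv 0\pmod{p}$. The vanishing of the high-index Fourier--Jacobi coefficients must be propagated from the low-index ones using relations that only the full $Sp_n(\mathbb{Z})$-modularity imposes (for instance, the symmetry $a(F;T)=\pm a(F;T[U])$ identifies coefficients of $\phi_m$ with coefficients of $\phi_{m'}$ for other indices, or one needs a bound on the vanishing order of a nonzero mod $p$ Siegel modular form). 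Supplying that propagation mechanism, with the exact constant $(4/3)^n/16$, is precisely the content of \cite{R-R}; your outline presupposes it rather than proving it.
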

\section{Congruence properties of theta series for Niemeier lattices}
\label{sec:3}
In \cite{N-T}, some congruence properties of theta series for Niemeier lattices
were obtained. In this section, we present more such properties.
\subsection{Quaternary quadratic forms of discriminant $11^2$}
\label{sec3-1}
For later use, we consider some quaternary quadratic forms.

It follows from Nipp's table \cite{N} that the genus of even positive quaternary
quadratic forms of discriminant $11^2$ with level 11 consists of three classes, and
their representatives are given by
\begin{equation}
\label{Si}
S_1=
\begin{pmatrix}
2 & 0 & 1 & 0 \\
0 & 2 & 0 & 1 \\
1 & 0 & 6 & 0 \\
0 & 1 & 0 & 6
\end{pmatrix},
\quad
S_2=
\begin{pmatrix}
2 & 1 & 1 & 1 \\
1 & 2 & 0 & 1 \\
1 & 0 & 8 & 4 \\
1 & 1 & 4 & 8
\end{pmatrix},
\quad
S_3=
\begin{pmatrix}
4 & 2 & 1 & 1 \\
2 & 4 & 0 & 1 \\
1 & 0 & 4 & 2 \\
1 & 1 & 2 & 4
\end{pmatrix}
\end{equation}
By definition, we see that
$$
\vartheta_{S_i}^{(n)}\in M_2(\Gamma_0^n(11))_{\mathbb{Z}}\quad (i=1,2,3).
$$
\subsection{Congruence properties of theta series for Niemeier lattices}
\label{sec3-2}
In this subsection we prove some congruence relations that theta series for Niemeier
lattices satisfy, which is our main result.
We introduce the symbols that denote Niemeier lattices before describing
the results. In \cite{C-S},  Conway and Sloane correspond the
Greek characters $\alpha,\ldots, \omega$ to Niemeier lattices
in the order of their Coxeter number.
 (cf. \cite[Table 16.1]{C-S}).
The last lattice $\omega$ is the Leech lattice.
\begin{Thm}
\label{Theorem3.1}
{\it The following congruence relations hold:
\begin{align*}
{\rm (i)}\qquad&\vartheta_\alpha^{(3)} \equiv \vartheta_\kappa^{(3)}
                                    \equiv \vartheta_\psi^{(3)} \equiv \vartheta_{S_1}^{(3)} \pmod{11},
                                    \\
{\rm (ii)}\qquad&\vartheta_\delta^{(3)} \equiv \vartheta_\iota^{(3)}
                                    \equiv \vartheta_\chi^{(3)} \equiv \vartheta_{S_2}^{(3)} \pmod{11},
                                    \\
{\rm (iii)}\qquad&\vartheta_\epsilon^{(3)} \equiv \vartheta_\omega^{(3)}
                                    \equiv \vartheta_{S_3}^{(3)} \pmod{11}.
\end{align*}
where $\alpha, \kappa,\cdots$ are Niemeier lattices and $S_1,S_2,S_3$ are the Gram
matrices given in (\ref{Si}).}
\end{Thm}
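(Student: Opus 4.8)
The plan is to observe that, modulo $11$, every series appearing in the statement is a Siegel modular form of weight $12$ for the full group $\Gamma_3$, and then to reduce each congruence to a finite comparison of Fourier coefficients by means of the Sturm bound.

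First I would record the weights. Each Niemeier lattice is even unimodular of rank $24$, so $\vartheta_{\mathcal L}^{(3)}\in M_{12}(\Gamma_3)_{\mathbb Z}$. For the quaternary forms, Theorem \ref{Theorem2.1} applies with $p=11$ and $n=3$: indeed $11\ge n+3$, $11\equiv 3\pmod 4$, and each $S_i$ is even and positive definite with $\det S_i=11^2$ and level $11$. Hence there exist $G_i\in M_{p+1}(\Gamma_3)_{\mathbb Z_{(11)}}=M_{12}(\Gamma_3)_{\mathbb Z_{(11)}}$ with $\vartheta_{S_i}^{(3)}\equiv G_i\pmod{11}$. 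After replacing each $\vartheta_{S_i}^{(3)}$ by $G_i$, every congruence in (i)--(iii) becomes the assertion that the difference of two elements of $M_{12}(\Gamma_3)_{\mathbb Z_{(11)}}$ is $\equiv 0\pmod{11}$.

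Next I would apply Theorem \ref{Theorem2.2}. For $n=3$ and $k=12$ the Sturm bound reads $t_{ii}\le (4/3)^3\cdot 12/16=16/9$, so it suffices to verify each congruence for the finitely many $0\le T\in Sym_3^*(\mathbb Z)$ with $t_{ii}\in\{0,1\}$. Since $G_i\equiv\vartheta_{S_i}^{(3)}\pmod{11}$ coefficientwise, the forms $G_i$ never have to be written down: it is enough to compare the coefficients of $\vartheta_{\mathcal L}^{(3)}$ directly with those of $\vartheta_{S_i}^{(3)}$ (and two Niemeier series with each other) at these low $T$. I would first enumerate this short list; positive semidefiniteness forces $t_{ij}^2\le t_{ii}t_{jj}\le 1$, so every off-diagonal entry lies in $\{0,\pm\tfrac12,\pm1\}$, and any entry sharing an index with a vanishing diagonal entry is itself $0$.

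It remains to carry out the finite computation, which is where the real work lies. For a Gram matrix $S$ the coefficient $a(\vartheta_S^{(3)};T)$ is the number of integral $X$ with ${}^tXSX=2T$, i.e.\ the number of triples of lattice vectors with Gram matrix $2T$; because $t_{ii}\in\{0,1\}$, each of the three vectors is either $0$ or a vector of norm $2$. Thus for the Niemeier lattices these coefficients are pure root-configuration numbers---the number of roots, together with the numbers of ordered pairs and triples of roots with prescribed pairwise inner products in $\{0,\pm1,\pm2\}$---all read off from the root system attached to each lattice in \cite[Table 16.1]{C-S}; for the Leech lattice $\omega$, which has no roots, only configurations involving the zero vector contribute. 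The numbers $a(\vartheta_{S_i}^{(3)};T)$ are computed directly from the matrices in (\ref{Si}). \emph{The main obstacle is exactly this bookkeeping}: one must assemble the root pair- and triple-counts for all eight lattices and check that inside each of the three groups they agree modulo $11$, both among the Niemeier lattices and with the corresponding $S_i$. The partition into (i)--(iii) records precisely these coincidences mod $11$---already at the crudest level the root numbers $24h$, with $h$ the common Coxeter number, must match mod $11$---and once these tables are in hand the verification is mechanical.
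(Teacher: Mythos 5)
Your proposal is correct in substance and rests on the same two pillars as the paper's proof: the B\"ocherer--Nagaoka theorem (Theorem \ref{Theorem2.1}) to replace each $\vartheta_{S_i}^{(3)}$ by a level-one form $G_i\in M_{12}(\Gamma_3)_{\mathbb{Z}_{(11)}}$ modulo $11$, and the Richter--Westerholt-Raum Sturm bound (Theorem \ref{Theorem2.2}) with $(4/3)^3\cdot 12/16=16/9$ to reduce everything to coefficients at $T$ with $t_{ii}\le 1$. Where you genuinely diverge is in the treatment of the Niemeier-versus-Niemeier congruences. The paper does not compare all eight lattices by brute force: it invokes the earlier result \cite[Corollary 4]{N-T}, which says that a congruence of Coxeter numbers $h_{\mathcal{L}_1}\equiv h_{\mathcal{L}_2}\pmod{n}$ already forces $\vartheta_{\mathcal{L}_1}^{(3)}\equiv\vartheta_{\mathcal{L}_2}^{(3)}\pmod{n}$. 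Since $h_\alpha=46$, $h_\kappa=13$, $h_\psi=2$ (all $\equiv 2$), $h_\delta=25$, $h_\iota=14$, $h_\chi=3$ (all $\equiv 3$), and $h_\epsilon=22$, $h_\omega=0$ (both $\equiv 0$) modulo $11$, the six lattice-to-lattice congruences come for free, and only one representative per group ($\alpha$, $\delta$, $\omega$) needs to be compared against the corresponding $S_i$ via the Sturm-bound computation. Your route instead verifies every congruence by coefficient comparison, which is logically sound but requires assembling root pair- and triple-counts for all eight lattices rather than three; it buys independence from the prior result in \cite{N-T} at the cost of a substantially larger (though still finite and mechanical) table. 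One further organizational difference: the paper splits the check into a degree-$2$ comparison (which settles all singular $T$ by functoriality under the Siegel operator) followed by the three rank-$3$ coefficients with diagonal $(1,1,1)$, whereas you enumerate all $T$ with $t_{ii}\in\{0,1\}$ at once; these are equivalent. Note also that neither you nor the paper can avoid some computed input --- the paper supplies it as Tables 1--5, and your proposal leaves exactly that data to be filled in.
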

\begin{proof}
(i) Let $h_{\mathcal{L}}$ denote the Coxeter number of the Niemeier lattice $\mathcal{L}$.
We use the fact that the congruence relation 
$h_{\mathcal{L}_1} \equiv h_{\mathcal{L}_2} \pmod{n}$ implies 
$\vartheta_{\mathcal{L}_1}^{(3)} \equiv \vartheta_{\mathcal{L}_2}^{(3)} \pmod{n}$
(cf. \cite[Corollary 4]{N-T}).
As $h_\alpha=46$, $h_\kappa=13$, and $h_\psi=2$, we have
$h_\alpha \equiv h_\kappa \equiv h_\psi \equiv 2 \pmod{11}$. This implies
$$
\vartheta_\alpha^{(3)} \equiv \vartheta_\kappa^{(3)}
                                 \equiv \vartheta_\psi^{(3)} \pmod{11}.
$$
We will prove the congruence 
\begin{equation}
\label{maincong}
\vartheta_\alpha^{(3)} \equiv \vartheta_{S_1}^{(3)} \pmod{11}.
\end{equation}
To this end, we use the results for the Sturm bound. 
Specifically, we first replace $\vartheta_{S_1}^{(3)}$ with a modular form for $\Gamma_3$
in the sense of modulo 11. 

We note that $11 \equiv 3 \pmod{11}$
and that the matrix $S_1$ satisfies $\text{det}S_1=11^2$ with level 11.
Therefore, we can apply Theorem \ref{Theorem2.1} to $\vartheta_{S_1}^{(3)}$.
Namely, there is a modular form $G_1\in M_{12}(\Gamma_3)_{\mathbb{Z}_{(11)}}$ such that
$$
\vartheta_{S_1}^{(3)} \equiv G_1 \pmod{11}.
$$
We obtain the following table (for the abbreviation $T=[a,b,c]$, see \cite[(10)]{N-T}:
\begin{table}[h]
\caption{Fourier coefficients: Degree 2 case}
\begin{center}
\begin{tabular}{c||rr}  \hline
$T$       &  $a(\vartheta_{S_1}^{(2)};T)$     &   $a(\vartheta_\alpha^{(2)};T)$ \\  \hline
$[0,0,0]$   &           1                                    &              1                           \\
$[1,0,0]$   &           4                                    &            1104                        \\
$[1,1,1]$   &          0                                     &            97152                      \\
$[1,0,1]$   &           8                                    &           1022304                    \\
\end{tabular}
\end{center}
\end{table}
\\
If we use Theorem \ref{Theorem2.2}, we obtain 
$\vartheta_\alpha^{(2)} \equiv G_1 \equiv \vartheta_{S_1}^{(2)} \pmod{11}$. This implies
that
$$
a(\vartheta_\alpha^{(3)};T) \equiv a(\vartheta_{S_1}^{(3)};T) \pmod{11}
$$
holds for all $T\in Sym_3^*(\mathbb{Z})$ with $\text{rank}T<3$. Moreover, we
obtain the following numerical data (for the abbreviation 
$T=[a,b,c,;d,e,f]$, see \cite[(7)]{N-T}):
\newpage
\begin{table}[h]
\caption{Fourier coefficients: Degree 3 case}
\begin{center}
\begin{tabular}{c||rr}  \hline
$T$       &  $a(\vartheta_{S_1}^{(3)};T)$     &   $a(\vartheta_\alpha^{(3)};T)$ \\  \hline
$[1,1,1;1,1,1]$   &           0                                  &       4177536                         \\
$[1,1,1;0,0,1]$   &           0                                    &     81607680                    \\
$[1,1,1;0,0,0]$   &           0                                    &     781393536                     
\end{tabular}
\end{center}
\end{table}
By using Theorem \ref{Theorem2.2} again, we obtain
$$
a(\vartheta_\alpha^{(3)};T) \equiv a(\vartheta_{S_1}^{(3)};T) \pmod{11}
$$
for all $0\leq T\in Sym_3^*(\mathbb{Z})$. This completes the proof of (i).

The proofs of (ii) and (iii) are similar to that of (i). Concerning the
Coxeter numbers, we have the following numerical data:
\begin{table}[h]
\caption{Coxeter numbers}
\begin{center}
\begin{tabular}{c||rrr | rr}  
lattice                  &  $\delta$ & $\iota$ & $\chi$ & $\epsilon$ & $\omega$  \\  \hline
Coxeter number   &   25      &   14   &  3    &      22     &   0          \\         
\end{tabular}
\end{center}
\end{table}
\vspace{5mm}
\\
These imply that
$$
\vartheta_\delta^{(3)} \equiv \vartheta_\iota^{(3)} \equiv \vartheta_\chi^{(3)} 
\pmod{11},\qquad
\vartheta_\epsilon^{(3)} \equiv \vartheta_\omega^{(3)}  \pmod{11}.
$$
Thus, the proof is reduced to showing that
$$
\vartheta_\delta^{(3)} \equiv \vartheta_{S_2}^{(3)} \pmod{11},\qquad
\vartheta_\omega^{(3)} \equiv \vartheta_{S_3}^{(3)}  \pmod{11},
$$
which can be obtained from the following tables:
\begin{table}[h]
\caption{Fourier coefficients: Degree 2 case (Continued)}
\begin{center}
\begin{tabular}{c||rr | rr}  \hline
$T$       &  $a(\vartheta_{S_2}^{(2)};T)$     &   $a(\vartheta_\delta^{(2)};T)$ 
             &  $a(\vartheta_{S_3}^{(2)};T)$     &   $a(\vartheta_\omega^{(2)};T)$ 
\\  \hline
$[0,0,0]$   &           1        &           1      &    1    &           1          \\
$[1,0,0]$   &           6        &       600      &    0     &          0          \\
$[1,1,1]$   &          12      &       27600    &   0     &          0           \\
$[1,0,1]$   &           0       &     303600    &    0    &          0              
\end{tabular}
\end{center}
\end{table}
\newpage
\begin{table}[h]
\caption{Fourier coefficients: Degree 3 case (Continued)}
\begin{center}
\begin{tabular}{c||rr | rr}  \hline
$T$       &  $a(\vartheta_{S_2}^{(3)};T)$     &   $a(\vartheta_\delta^{(3)};T)$ 
            &  $a(\vartheta_{S_3}^{(3)};T)$     &   $a(\vartheta_\omega^{(3)};T)$ 
\\  \hline
$[1,1,1;1,1,1]$   &           0      &    607200     &     0   &   0  \\
$[1,1,1;0,0,1]$   &           0     & 12751200     &    0    &   0   \\
$[1,1,1;0,0,0]$   &           0     &   127512000 &    0    &   0                                
\end{tabular}
\end{center}
\end{table}
This completes the proof of (ii) and (iii).
\end{proof}
\section{Theta series for Leech lattice}
\label{sec:4}
In this section, we will show that the congruence relations described in the
previous section are extended to the case of the Leech lattice.

In \cite{O}, Ozeki provided a large number of data for the Fourier coefficients of theta series
for the Leech lattice. His numerical data enable us to extend the congruence
relation to the case of Leech lattice.
\begin{Thm}
\label{Theorem4.1}{\it
Let $\omega$ be the Leech lattice and $S_3$ the matrix given in
(\ref{Si}). Then, the following congruence relation holds:
$$
\vartheta_\omega^{(4)} \equiv \vartheta_{S_3}^{(4)} \pmod{11}.
$$
In particular,
$$
\varTheta (\vartheta_\omega^{(4)}) \equiv 0 \pmod{11},
$$
where $\varTheta$ is the theta operator defined in $\S$ \ref{sec:2-1}.}
\end{Thm}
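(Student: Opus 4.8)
The plan is to mimic the argument of Theorem \ref{Theorem3.1}, now in degree $n=4$. Since the Leech lattice $\omega$ is even unimodular of rank $24$, we already have $\vartheta_\omega^{(4)}\in M_{12}(\Gamma_4)_{\mathbb{Z}}$. For the quaternary form $S_3$ I note that $p=11$ satisfies $p\geq n+3=7$ and $p\equiv 3\pmod 4$, and that $\det S_3=11^2$ with level $11$; hence Theorem \ref{Theorem2.1} applies and produces a genuine Siegel modular form $G_3\in M_{12}(\Gamma_4)_{\mathbb{Z}_{(11)}}$ with $\vartheta_{S_3}^{(4)}\equiv G_3\pmod{11}$. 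Both $\vartheta_\omega^{(4)}$ and $G_3$ then lie in $M_{12}(\Gamma_4)_{\mathbb{Z}_{(11)}}$, so it suffices to show that their difference is $\equiv 0\pmod{11}$, and for this I would invoke the Sturm bound of Theorem \ref{Theorem2.2}.

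With $n=4$ and $k=12$ the Sturm bound reads $t_{ii}\leq (4/3)^4\cdot 12/16=64/27<3$, so only Fourier coefficients indexed by $T$ with every diagonal entry $\leq 2$ need to be compared. I would split these into two ranges. For $T$ of rank $<4$, bring $T$ by a $GL_4(\mathbb{Z})$-transformation into block form $\mathrm{diag}(T',0)$; since $S_3$ and the Leech lattice are positive definite, the vanishing diagonal blocks force the corresponding columns of any representing matrix to be zero, so $a(\vartheta^{(4)};T)=a(\vartheta^{(\mathrm{rank}\,T)};T')$ for both lattices. These coefficients therefore agree modulo $11$ by Theorem \ref{Theorem3.1}(iii) and its images under the Siegel $\Phi$-operator. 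It then remains to treat the genuinely new, rank-$4$ forms $T>0$ with $t_{ii}\leq 2$: this is a finite, explicit list of reduced forms, for which $a(\vartheta_{S_3}^{(4)};T)$ can be computed directly from $S_3$, while $a(\vartheta_\omega^{(4)};T)$ is to be read off from Ozeki's tables \cite{O}.

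The hard part will be precisely this last step: the degree-$4$ Fourier coefficients of the Leech theta series count representations of $4\times 4$ matrices by a $24$-dimensional lattice and are very expensive to produce, which is why the argument must rely on the extensive numerical data of \cite{O}; the remaining task is to confirm that those data cover every rank-$4$ reduced $T$ with $t_{ii}\leq 2$ and that the listed values match those of $\vartheta_{S_3}^{(4)}$ modulo $11$.

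Finally, the displayed consequence $\varTheta(\vartheta_\omega^{(4)})\equiv 0\pmod{11}$ follows formally once the main congruence is established. Indeed $\varTheta$ multiplies the $T$-th coefficient by $\det T$, so $\vartheta_\omega^{(4)}\equiv\vartheta_{S_3}^{(4)}\pmod{11}$ gives $\varTheta(\vartheta_\omega^{(4)})\equiv\varTheta(\vartheta_{S_3}^{(4)})\pmod{11}$. For $T$ of rank $<4$ we have $\det T=0$, while for $T>0$ a nonzero coefficient $a(\vartheta_{S_3}^{(4)};T)$ means $2T=S_3[X]$ with $\det X\neq 0$, whence $\det(2T)=(\det X)^2\det S_3=(\det X)^2\,11^2$ and $\det T$ has $11$-adic valuation at least $2$. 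In either case $a(\vartheta_{S_3}^{(4)};T)\det T\equiv 0\pmod{11}$, so $\varTheta(\vartheta_{S_3}^{(4)})\equiv 0\pmod{11}$ and therefore $\varTheta(\vartheta_\omega^{(4)})\equiv 0\pmod{11}$.
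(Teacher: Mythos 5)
Your reduction is exactly the paper's: apply Theorem \ref{Theorem2.1} to replace $\vartheta_{S_3}^{(4)}$ by $G_3\in M_{12}(\Gamma_4)_{\mathbb{Z}_{(11)}}$, compute the Sturm bound $(4/3)^4\cdot 12/16=64/27<3$ so that only $T$ with $t_{ii}\leq 2$ matter, and dispose of the singular $T$ by Theorem \ref{Theorem3.1}(iii) via the $\Phi$-operator. Your final paragraph on $\varTheta(\vartheta_\omega^{(4)})\equiv 0\pmod{11}$ is correct and in fact more explicit than the paper, which only asserts this implication. However, the heart of the theorem --- the comparison of coefficients at positive definite $T$ with $t_{ii}\leq 2$ --- is precisely the part you defer (``the remaining task is to confirm\dots''), so what you have is a plan plus two correct bookends, not a proof: the decisive step is missing.

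What the paper supplies at that point, and what your proposal lacks, are three observations that collapse the finite check to a single class. First, if some $t_{ii}=1$ then $a(\vartheta_\omega^{(4)};T)=0$ outright, since the Leech lattice has minimum $4$; this is indispensable because Ozeki's tables only list $T$ with all $t_{ii}=2$, so these coefficients could never be ``read off'' from \cite{O}. Second, on the $S_3$ side, $a(\vartheta_{S_3}^{(4)};T)=\sharp\{G \mid S_3[G]=2T\}$ and any $G$ representing a positive definite $T$ is nonsingular, so $\det(2T)=(\det G)^2\cdot 11^2$; hence $a(\vartheta_{S_3}^{(4)};T)=0$ unless $d_T$ is $11^2$ times a perfect square, which in the range forced by $t_{ii}\leq 2$ (Hadamard gives $d_T\leq 4^4=256$) means $d_T=11^2$, where the representations are exactly the automorphisms and $a(\vartheta_{S_3}^{(4)};T)=|\mathrm{Aut}(S_3)|=24$. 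Third, the verification against Ozeki's data then amounts to checking that every tabulated coefficient with $d_T\neq 121$ carries a factor $11$, while the single entry with $d_T=121$ satisfies $a(\vartheta_\omega^{(4)};T)=2^{20}\cdot 3^8\cdot 5^3\cdot 7^2\cdot 13\cdot 23\equiv 2\equiv 24\pmod{11}$. Your worry about whether the tables cover every reduced rank-$4$ form is the right one to have, but it is answered only through these reductions (they shrink what must be checked on the $S_3$ side to the unique class with $d_T=11^2$, with everything else needing only divisibility of the Leech coefficients by $11$); without them, and without the actual enumeration and matching, your proposal stops exactly where the theorem's content begins.
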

\begin{proof}
We consider a modular form $G_3\in M_{12}(\Gamma_4)_{\mathbb{Z}_{(11)}}$ such that
$$
\vartheta_{S_3}^{(4)} \equiv G_3 \pmod{11}
$$
as in the case of degree 3.
The congruence relation that we should prove is
\begin{equation}
\label{degree4-1}
a(\vartheta_\omega^{(4)};T) \equiv a(\vartheta_{S_3};T)
(\equiv a(G_3;T)) \pmod{11}
\end{equation}
for all $0\leq T\in Sym_4^*(\mathbb{Z})$. As
$\vartheta_\omega^{(3)}\equiv \vartheta_{S_3}^{(3)} \pmod{11}$, it suffices
to prove (\ref{degree4-1}) for $T$ with $T>0$. Moreover, by considering
the result for the  Sturm bound (Theorem \ref{Theorem2.2}), it follows that
(\ref{degree4-1}) should be shown for $T$ satisfying
\begin{equation}
\label{degree4-2}
0<T=(t_{ij}) \in Sym_4^*(\mathbb{Z})\;\;\text{with}\;\;
t_{ii}\leq 2\quad (i=1,2,3,4)
\end{equation}
If any of $t_{11},\ldots,t_{44}$ is 1 under the condition (\ref{degree4-2})
on $T$, we have $a(\vartheta_\omega^{(4)};T)=0$ because the lattice $\omega$
has no vectors of length 2. Moreover, from the identity
$$
a(\vartheta_{S_3}^{(4)};T)=\sharp\{\,G\,\mid\, S_3[G]=2T\,\}
$$
and $\text{det}S_3=11^2$, we see that
$$
a(\vartheta_{S_3}^{(4)};T)=0
$$
for $T$ whose discriminant satisfies $d_T<11^2$. Consequently, it suffices to
prove (\ref{degree4-1}) for $0<T=(t_{ij})\in Sym_4^*(\mathbb{Z})$ with
$$
t_{11}=t_{22}=t_{33}=t_{44}=2.
$$
If we consider Table 6 in $\S$ 5, we see that
the target congruence relation (\ref{degree4-1})
should be proved only for $T$ with $d_T=11^2$. In the case $d_T=11^2$,
we have $a(\vartheta_{S_3}^{(4)};T)=|\text{Aut}(S_3)|=24$ (cf. \cite{N}).
Consequently, we obtain
$$
a(\vartheta_\omega^{(4)};T) =12599323656192000
                                          \equiv 24=a(\vartheta_{S_4}^{(4)};T) \pmod{11}
$$
for $T$ with $d_T=11^2$. This completes the proof of (\ref{degree4-1}).
The congruence relation $\varTheta (\vartheta_\omega^{(4)}) \equiv 0 \pmod{11}$
is a consequence of $\vartheta_\omega^{(4)} \equiv \vartheta_{S_3}^{(4)} \pmod{11}$.
\end{proof}
\section{Fourier coefficients of theta series for Leech lattice}
\subsection{Ozeki's calculation}
\label{sec5-1}
In this subsection, we present a table concerning the Fourier coefficients
of theta series for the Leech lattice calculated by Ozeki.
\begin{table}[h]
\caption{Fourier coefficients of degree 4 theta series for Leech lattice by Ozeki
\cite[Table 5]{O} : factorized.}
\label{Ozeki}
\begin{center}
\begin{tabular}{r l l}  \hline
$d_T$  &  $T$     &   $a(\vartheta_\omega^{(4)};T)$ \\ \hline
$64$  & (2,2,2,2,0,0,0,2,2,2)  & $2^8\cdot 3^8\cdot 5^3\cdot 7^2\cdot 11\cdot 13\cdot 23$  \\
$80$  & (2,2,2,2,2,0,0,2,0,2)  & $2^{11}\cdot 3^8\cdot 5^3\cdot 7^2\cdot 11\cdot 13\cdot 23$ \\
$81$  & (2,2,2,2,1,1,1,1,2,2)  & $2^{19}\cdot 3^3\cdot 5^3\cdot 7^2\cdot 11\cdot 13\cdot 23$ \\
$84$  & (2,2,2,2,1,0,0,2,2,2)  & $2^{16}\cdot 3^7\cdot 5^3\cdot 7\cdot 11\cdot 13\cdot 23$  \\
$96$  & (2,2,2,2,2,1,-1,0,0,2) & $2^{15}\cdot 3^7\cdot 5^3\cdot 7^2\cdot 11\cdot 13\cdot 23$  \\
$105$ & (2,2,2,2,2,1,0,0,1,2) & $2^{19}\cdot 3^7\cdot 5^3\cdot 7\cdot 11\cdot 13\cdot 23$  \\
$108$ & (2,2,2,2,2,1,-1,-1,1,-1) & $2^{19}\cdot 3^4\cdot 5^4\cdot 7^2\cdot 11\cdot 13\cdot 23$ \\
$112$ & (2,2,2,2,2,1,0,2,0,0) & $2^{16}\cdot 3^8\cdot 5^4\cdot 7\cdot 11\cdot 13\cdot 23$  \\
$116$ & (2,2,2,2,2,1,0,0,2,0) & $2^{16}\cdot 3^8\cdot 5^3\cdot 7^2\cdot 11\cdot 13\cdot 23$  \\
$120$ & (2,2,2,2,1,1,1,2,2,0) & $2^{18}\cdot 3^7\cdot 5^3\cdot 7^2\cdot 11\cdot 13\cdot 23$  \\
$121$ & (2,2,2,2,2,1,0,1,1,2) & $2^{20}\cdot 3^8\cdot 5^3\cdot 7^2\cdot 13\cdot 23$ 
\end{tabular}
\end{center}
\end{table}
\\
\\
\begin{table}[h]
\caption{
(Continued)}
\begin{center}
\begin{tabular}{r l l}  \hline
$d_T$  &  $T$     &   $a(\vartheta_\omega^{(4)};T)$ \\ \hline
$125$ & (2,2,2,2,1,1,-1,-1,1,1) & $2^{20}\cdot 3^9\cdot 5\cdot 7^2\cdot 11\cdot 13\cdot 23$  \\
$128$ & (2,2,2,2,0,0,0,2,2,0) & $2^{10}\cdot 3^9\cdot 5^4\cdot 7^2\cdot 11^2\cdot 13\cdot 23$  \\
$129$ & (2,2,2,2,1,1,1,1,2,2) & $2^{19}\cdot 3^7\cdot 5^3\cdot 7^2\cdot 11\cdot 13\cdot 23$  \\
$132$ & (2,2,2,2,2,1,-1,0,0,1) & $2^{17}\cdot 3^7\cdot 5^4\cdot 7^2\cdot 11\cdot 13\cdot 23$  \\
$140$ & (2,2,2,2,1,1,-1,0,0,2) & $2^{19}\cdot 3^8\cdot 5^4\cdot 7\cdot 11\cdot 13\cdot 23$  \\
$144$ & (2,2,2,2,2,1,-1,0,0,0) & $2^{16}\cdot 3^7\cdot 5^3\cdot 7^2\cdot 11\cdot 13\cdot 23^2$  \\
$144$ & (2,2,2,2,2,0,0,0,0,2) & $2^{12}\cdot 3^7\cdot 5^3\cdot 7^2\cdot 11\cdot 13\cdot 23\cdot 373$  \\
$145$ & (2,2,2,2,2,1,0,-1,-1,1) & $2^{19}\cdot 3^8\cdot 5^3\cdot 7^2\cdot 11\cdot 13\cdot 23$  \\
$153$ & (2,2,2,2,1,1,0,1,1,2) & $2^{19}\cdot 3^7\cdot 5^4\cdot 7^2\cdot 11\cdot 13\cdot 23$  \\
$156$ & (2,2,2,2,1,1,1,2,0,0) & $2^{20}\cdot 3^8\cdot 5^3\cdot 7^2\cdot 11\cdot 13\cdot 23$  \\
$160$ & (2,2,2,2,1,1,-1,1,-1,0) & $2^{15}\cdot 3^8\cdot 5^3\cdot 7^2\cdot 11\cdot 13\cdot 23\cdot 41$  \\
$160$ & (2,2,2,2,1,1,0,2,0,0) & $2^{18}\cdot 3^8\cdot 5^4\cdot 7^2\cdot 11\cdot 13\cdot 23$    
\end{tabular}
\end{center}
\end{table}
\\
\\
Here, the following abbreviation was used:
$$
T=(t_{ij})=:(t_{11},t_{22},t_{33},t_{44},t_{12},t_{13},t_{23},t_{14},t_{24},t_{34}).
$$
\subsection{Observation}
\label{sec5-2}
Herein, we provide
some congruences that can be obtained by calculations analogous to those by Ozeki.

Ozeki \cite{O} calculated the value $a(\vartheta_\omega^{(n)};T)$ for various
degrees $n$ \cite[Table 4,5,6]{O}.
We list the congruence relations expected from his tables.
\begin{align*}
& \bullet\quad \vartheta_\omega^{(n)} \equiv 1 \pmod{7}\\
& \bullet\quad \varTheta (\vartheta_\omega^{(4)}) \equiv \varTheta (\vartheta_\omega^{(5)})
    \equiv 0 \pmod{7^2}.
\end{align*}


Shoyu Nagaoka
\\
Department of Mathematics
\\
Kindai University
\\
Higashi-Osaka
\\
Osaka 577-8502
\\
Japan

\end{document}